\numberwithin{equation}{section}
\newtheorem{theorem}{Theorem}[section]
\newtheorem{lemma}[theorem]{Lemma}
\newtheorem{remark}[theorem]{Remark}
\def\Ker {{\rm Ker}}
\def\Im {{\rm Im}}
\def\dim {{\rm dim}}
\def\div {{\rm div}}
\def\rot {{\rm rot}}
\def\grad {{\rm grad}}
\begin{document}

\vspace{7mm}

\centerline{\large \bf Bi-invariant metric on volume-preserving diffeomorphisms  group}
\centerline{\large \bf of a three-dimensional  manifold}

\vspace{3mm}

\centerline{\large \bf N.~K.~Smolentsev }

\vspace{7mm}

\begin{abstract}
We show the existence of a weak bi-invariant symmetric nondegenerate 2-form  on  the  volume-preserving  diffeomorphism  group  of  a  three-dimensional  manifold  and  study  its properties. Despite the fact that the space $\mathcal{D}_\mu(M^3)$ is infinite-dimensional, we succeed in defining the signature of the bi-invariant quadric form.  It is equal to the $\eta$-invariant of the manifold $M^3$.
\end{abstract}

\section{Invariant form on $d\Gamma(\Lambda^{2k-1}M)$}
Let $(M,g)$ be a smooth (of class $C^\infty$) compact Riemannian manifold
of dimension $n = 4k-1$ without boundary.  Consider the elliptic self-adjoint operator $A$ acting on smooth exterior differential forms on $M$ of even degree $2p$, given by
$$
A\varphi=(-1)^{k+p+1}(*d-d*)\varphi,\quad \deg \varphi =2p.
$$
Here $d$ is exterior differential and $*$ is the Hodge duality operator defined by metric. Let $\delta = (-1)^{np+n+1}*d*$ is the codifferential acting on exterior differential $p$-forms on $M$.

The  space $\Gamma(\Lambda^{2k}M)$ of  smooth  forms  on $M$ of  degree $2k$ is  invariant  under  the  action  of  $A$. Let us write the Hodge decomposition of the space $\Gamma(\Lambda^{2k}M)$ into a direct sum:
$$
\Gamma(\Lambda^{2k}M) = \delta\Gamma(\Lambda^{2k+1}M)\oplus
H^{2k}\oplus d\Gamma(\Lambda^{2k-1}M),
$$
where $\delta\Gamma(\Lambda^{2k+1}M)$ is  the  space  of  co-exact  forms  and $H^{2k}$ is  the  space  of  harmonic  forms, and  let $d\Gamma(\Lambda^{2k-1}M)$  be the  space  of  exact $2k$-forms.  The  kernel  of  the  operator $A=(-1)^{2k+1}(* d -d *) = -* d +d *$ on  the  space
$\Gamma(\Lambda^{2k}M)$  consists of harmonic forms and the image obviously coincides with the space of exact and co-exact forms.
Moreover, the first and second components of the operator $A=-* d +d *$ separately act on $\delta\Gamma(\Lambda^{2k+1}M)$ и $d\Gamma(\Lambda^{2k-1}M)$:
$$
\Im(-*d)=\delta\Gamma(\Lambda^{2k+1}M), \qquad
\Ker(-*d)=H^{2k}\oplus d\Gamma(\Lambda^{2k-1}M),
$$
$$
\Im(d*)=d\Gamma(\Lambda^{2k-1}M), \qquad \Ker(d *)= H^{2k} \oplus
\delta\Gamma(\Lambda^{2k+1}M).
$$
Therefore, the restriction of the operator $A=- * d+d *$ to the direct sum  $\delta\Gamma(\Lambda^{2k+1}M)\oplus d\Gamma(\Lambda^{2k-1}M)$ is an isomorphism preserving this decomposition.
Hence the operator $A$ has an inverse $A^{-1}$ on the space
$\delta\Gamma(\Lambda^{2k+1}M)$, as well as on the space $d\Gamma(\Lambda^{2k-1}M)$.  In what follows, we use the operator
$$
A^{-1} :d\Gamma(\Lambda^{2k-1}M) \rightarrow d\Gamma(\Lambda^{2k-1}M)
$$
inverse to the operator $A = d *$ on the space $d\Gamma(\Lambda^{2k-1}M)$.

On the space $\Gamma(\Lambda^{2k}M)$, there exists the natural inner product
\begin{equation}
(\alpha,\beta)= \int_M \alpha\wedge *\beta, \qquad \alpha,\beta
\in \Gamma(\Lambda^{2k}M). 
\label{(alpha,beta)-9.1}
\end{equation}
Then the Hodge decomposition $\Gamma(\Lambda^{2k}M) = \delta\Gamma(\Lambda^{2k+1}M) \oplus H^{2k}\oplus d\Gamma(\Lambda^{2k-1}M)$ is orthogonal. Let $p:\Gamma(\Lambda^{2k}M) \rightarrow d\Gamma(\Lambda^{2k-1}M)$ be the orthogonal projection.

Let $L_X=d\cdot i_X +i_X \cdot d$ be the Lie derivative, where $i_X$ is the inner product, $i_X \alpha = \alpha(X,\cdot)$.

\begin{lemma} \label{Lem-1}
On the space $d\Gamma(\Lambda^{2k-1}M)$ the following equality is fulfilled
$$
A^{-1}\cdot L_X = p\, *\, i_X
$$
for any vector field $X$.
\end{lemma}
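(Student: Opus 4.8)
The plan is to exploit that $A$ restricts to an isomorphism of $d\Gamma(\Lambda^{2k-1}M)$ onto itself, so the asserted identity $A^{-1}L_X = p\,*\,i_X$ is equivalent to $L_X = A\,(p\,*\,i_X)$, provided both sides of the latter are first checked to take values in $d\Gamma(\Lambda^{2k-1}M)$. Fix $\omega\in d\Gamma(\Lambda^{2k-1}M)$, a form of degree $2k$ with $d\omega=0$. By the Cartan formula $L_X\omega = d\,i_X\omega + i_X\,d\omega = d\,i_X\omega$, which is exact and hence lies in $d\Gamma(\Lambda^{2k-1}M)$; on the other side, $p\,*\,i_X\omega$ lies in $d\Gamma(\Lambda^{2k-1}M)$ by the very definition of the projection $p$. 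On this subspace $A$ acts as $d*$, so it remains to establish $d*\,(p\,*\,i_X\omega) = L_X\omega$.

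The key step is the identity $d*\circ p = d*$ on all of $\Gamma(\Lambda^{2k}M)$. Writing an arbitrary $2k$-form through its Hodge decomposition $\eta = \delta\beta + h + d\gamma$, one has $p\eta = d\gamma$, while $d*\,\delta\beta = 0$ and $d*\,h = 0$ because both summands $\delta\Gamma(\Lambda^{2k+1}M)$ and $H^{2k}$ are contained in $\Ker(d*)=H^{2k}\oplus\delta\Gamma(\Lambda^{2k+1}M)$ by the identities recorded above (for the harmonic part one may also note directly that $*h$ is again harmonic, hence closed). Thus the projection may be dropped in front of $d*$, which gives $d*\,(p\,*\,i_X\omega) = d*\,(*\,i_X\omega) = d\,(*\!*\,i_X\omega)$.

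It then remains to evaluate the double Hodge star on the $(2k-1)$-form $i_X\omega$. Since $n=4k-1$, on forms of degree $q=2k-1$ the sign is $*\!*=(-1)^{q(n-q)}=(-1)^{(2k-1)\cdot 2k}=\Id$, the exponent $(2k-1)(2k)$ being even. Therefore $d\,(*\!*\,i_X\omega)=d\,i_X\omega=L_X\omega$, which closes the chain; applying $A^{-1}$ to the equality $L_X\omega = A\,(p\,*\,i_X\omega)$ yields $A^{-1}L_X\omega = p\,*\,i_X\omega$, as claimed.

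The computation itself is short, and I expect the only genuinely delicate points to be bookkeeping of two kinds. First, one must be sure that $L_X\omega$ really lands in $d\Gamma(\Lambda^{2k-1}M)$, so that $A^{-1}$ may legitimately be applied and $AA^{-1}=\Id$ holds there; this is exactly where the closedness $d\omega=0$ of exact forms is used. Second, one must pin down the sign of $*\!*$ on $(2k-1)$-forms, which is what forces the dimension to be $n=4k-1$ and makes $p\,*\,i_X$, rather than $\pm\,p\,*\,i_X$, the correct expression on the right-hand side.
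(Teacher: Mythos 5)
Your proof is correct and takes essentially the same route as the paper's: both rest on Cartan's formula $L_X = d\,i_X$ on closed forms, the sign identity $**=\Id$ on $(2k-1)$-forms in dimension $n=4k-1$, and the fact that $\Ker p = \Ker(d*)$ so that the projection can be absorbed by $d*$; the paper merely runs the computation forward, writing $A^{-1}L_X(d\alpha)=(d*)^{-1}(d*)(d\beta+\gamma)=d\beta=p*i_X(d\alpha)$, whereas you verify the equivalent identity $A(p\,*\,i_X)=L_X$ and then apply $A^{-1}$. The only difference is that you make explicit the sign bookkeeping for $**$ and the vanishing of $d*$ on harmonic and co-exact parts, which the paper leaves implicit.
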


\begin{proof}
Let  $d\alpha \in d\Gamma(\Lambda^{2k-1}M)$. Then $*i_X d\alpha =d\beta +\gamma$ where $d\beta \in d\Gamma(\Lambda^{2k-1}M)$ and $\gamma\in \Ker \delta = \Ker p = \Ker (d*)$. We will notice that on the space $d\Gamma(\Lambda^{2k-1}M)$ we have $L_X=d\cdot i_X$ and $A=d*$. Therefore,
$$
p*i_X(d\alpha) = d\beta,
$$
$$
A^{-1}\cdot L_X (d\alpha) =(d*)^{-1}d\, i_X(d\alpha)= (d*)^{-1}d**i_X(d\alpha)=
$$
$$
=(d*)^{-1}(d*)(d\beta +\gamma)=(d*)^{-1}(d*)(d\beta)=d\beta .
$$
\end{proof}

\begin{lemma} \label{Lem-2}
The operator $p*i_X$ is skew-symmetric on the space $d\Gamma(\Lambda^{2k-1}M)$:
$$
(p*i_X\, d\alpha, d\beta) +(d\alpha, p*i_X\, d\beta)=0
$$
for all $d\alpha, d\beta \in d\Gamma(\Lambda^{2k-1}M)$.
\end{lemma}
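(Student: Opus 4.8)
The plan is to first strip the orthogonal projection $p$ out of the statement. Since $p$ is the $L^2$-orthogonal projection of $\Gamma(\Lambda^{2k}M)$ onto $d\Gamma(\Lambda^{2k-1}M)$, it is self-adjoint and idempotent and fixes every exact form. Hence for $d\beta\in d\Gamma(\Lambda^{2k-1}M)$ one has $(p*i_X d\alpha,\,d\beta)=(*i_X d\alpha,\,p\,d\beta)=(*i_X d\alpha,\,d\beta)$, and symmetrically $(d\alpha,\,p*i_X d\beta)=(d\alpha,\,*i_X d\beta)$. So it suffices to establish $(*i_X d\alpha,\,d\beta)+(d\alpha,\,*i_X d\beta)=0$ with no projection present. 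Note first that this pairing is legitimate: $i_X$ lowers degree to $2k-1$ and $*$ raises it to $n-(2k-1)=2k$, so $*i_X d\alpha$ is again a $2k$-form.

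Next I would convert each summand into an integral of a wedge product via the definition $(\varphi,\psi)=\int_M\varphi\wedge *\psi$ and the symmetry of this inner product. Using $**=\id$ on forms of degree $2k-1$ in dimension $n=4k-1$ (the sign $(-1)^{(2k-1)(2k)}$ being $+1$), I obtain $(*i_X d\alpha,\,d\beta)=(d\beta,\,*i_X d\alpha)=\int_M d\beta\wedge *(*i_X d\alpha)=\int_M d\beta\wedge i_X d\alpha$, and likewise $(d\alpha,\,*i_X d\beta)=\int_M d\alpha\wedge i_X d\beta$. The whole claim thus reduces to showing $\int_M d\beta\wedge i_X d\alpha+\int_M d\alpha\wedge i_X d\beta=0$.

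The crux is then a single pointwise identity. Because $d\alpha\wedge d\beta$ is a $4k$-form on a manifold of dimension $4k-1$, it vanishes identically. Applying the antiderivation $i_X$ gives $0=i_X(d\alpha\wedge d\beta)=(i_X d\alpha)\wedge d\beta+(-1)^{2k}d\alpha\wedge(i_X d\beta)$, that is $(i_X d\alpha)\wedge d\beta=-\,d\alpha\wedge(i_X d\beta)$. Reordering the factors in the first integrand (the Koszul sign $(-1)^{2k(2k-1)}$ is again $+1$) turns $\int_M d\beta\wedge i_X d\alpha$ into $\int_M(i_X d\alpha)\wedge d\beta=-\int_M d\alpha\wedge i_X d\beta$, so the two integrals cancel and the skew-symmetry follows.

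I expect the only real obstacle to be sign bookkeeping: I must verify that the two applications of $*$ and the graded-commutativity reorderings each contribute $+1$, which all hinges on the parities of $2k(2k-1)$ and $(2k-1)(2k)$ being even in dimension $n=4k-1$. The substantive, non-routine step — the one that makes the computation collapse — is the observation that $d\alpha\wedge d\beta=0$ purely for dimensional reasons, so that the Leibniz rule for $i_X$ supplies exactly the needed cancellation. No use of $A$, $A^{-1}$, or Lemma~\ref{Lem-1} is required here, though the result is of course compatible with the identification $p*i_X=A^{-1}L_X$ of that lemma.
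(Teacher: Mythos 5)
Your proof is correct and follows essentially the same route as the paper's: drop the projection $p$ by orthogonality since $d\alpha,d\beta$ are already exact, rewrite both inner products as wedge-product integrals using $**=\id$ on $(2k-1)$-forms, and conclude from the Leibniz rule $i_X(d\alpha\wedge d\beta)=0$, which holds because $d\alpha\wedge d\beta$ is a $4k$-form on a $(4k-1)$-manifold. Your sign bookkeeping is in fact more explicit than the paper's, but the argument is the same.
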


\begin{proof}
As the operator $p$ is orthogonal projection on $d\Gamma(\Lambda^{2k-1}M)$, then
$$
(p*i_X\, d\alpha, d\beta) +(d\alpha, p*i_X\, d\beta)=(*i_X\, d\alpha, d\beta) +(d\alpha, *i_X\, d\beta)=
$$
$$
=\int_M i_X\, d\alpha \wedge d\beta +\int_M  d\alpha \wedge **i_X\, d\beta=
\int_M \left( i_X\, d\alpha \wedge d\beta +d\alpha \wedge i_X\, d\beta\right)=0,
$$
as $i_X\, d\alpha \wedge d\beta +d\alpha \wedge i_X\, d\beta=i_X(d\alpha \wedge d\beta)=0$.
\end{proof}

Consider the following bilinear form $Q$ on $d\Gamma(\Lambda^{2k-1}M)$ introduced in \cite{APS1}:  for $d\alpha,d\beta \in d\Gamma(\Lambda^{2k-1}M)$,
\begin{equation}
Q(d\alpha,d\beta)=(d\alpha,A^{-1}d\beta)=\int_M d\alpha\wedge \beta. 
\label{Q(dalpha,dbeta)-9.2}
\end{equation}

Since the operator $A$ is self-adjoint, the form $Q$ is symmetric.  The signature of the corresponding quadratic form $Q(d\alpha,d\alpha)$ is equal to the $\eta$-invariant of the manifold $M$  \cite{APS1}.  If $M$ is the boundary of a $4k$-dimensional
manifold $N$, then (see \cite{APS1})
$$
\eta=\int_N L_k - {\rm sign} N,
$$
where $L_k=L_k(p_1,\dots ,p_k)$ is  the $L$-Hirzebruch  polynomial  and ${\rm
sign} N$ is  the  signature  of  the  natural quadratic form on the $2k$-cohomology space $H^{2k}(N,\mathbb{R})$.

Let $\mathcal{D}_0$ be the connected components of the identity of the smooth diffeomorphism group of the manifold $M$.  The group $\mathcal{D}_0$ acts on the space $d\Gamma(\Lambda^{2k-1}M)$ to the right:
$$
d\Gamma(\Lambda^{2k-1}M)\times \mathcal{D}_0 \rightarrow
d\Gamma(\Lambda^{2k-1}M),  \quad\ (d\alpha,\eta) \rightarrow
\eta^*(d\alpha),
$$
where $\eta^*$ is  the  codifferential  of  a  diffeomorphism $\eta \in
\mathcal{D}_0$. Expression  (\ref{Q(dalpha,dbeta)-9.2})  for  the  quadratic  form $Q$ implies the following:

\begin{theorem} \label{Inv-Q}
The  bilinear  form $Q$  on $d\Gamma(\Lambda^{2k-1}M)$ is  invariant  under  the  action  of  the  group $\mathcal{D}_0$ on $d\Gamma(\Lambda^{2k-1}M)$: for any vector field $X$ on $M$,
\begin{equation}
Q(L_Xd\alpha, d\beta)+Q(d\alpha,L_X d\beta)= 0. 
\label{Q(LXdalpha,dbeta)-9.3}
\end{equation}
\end{theorem}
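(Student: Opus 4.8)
The plan is to argue entirely from the explicit integral representation $Q(d\alpha,d\beta)=\int_M d\alpha\wedge\beta$ recorded in (\ref{Q(dalpha,dbeta)-9.2}), rather than from the operator definition $(d\alpha,A^{-1}d\beta)$. The first point to settle is that this integral is independent of the choice of potential $\beta$ for the exact form $d\beta$: if $\beta'=\beta+\gamma$ with $d\gamma=0$, then $\int_M d\alpha\wedge\gamma=\int_M d(\alpha\wedge\gamma)=0$ by Stokes's theorem, since $M$ is closed and $\gamma$ is closed. This freedom is exactly what lets me use convenient potentials in the Lie-derivative computation.

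Next I would use that the exterior derivative commutes with the Lie derivative, $L_X d=d L_X$. Hence $L_X(d\alpha)=d(L_X\alpha)$ is again exact and $L_X\alpha$ is a legitimate potential for it; likewise $L_X\beta$ is a potential for $L_X(d\beta)$. Substituting into the integral formula,
\begin{equation*}
Q(L_Xd\alpha,d\beta)+Q(d\alpha,L_Xd\beta)=\int_M L_X(d\alpha)\wedge\beta+\int_M d\alpha\wedge L_X\beta.
\end{equation*}
Because $L_X$ is a derivation of the exterior algebra, the integrand is precisely $L_X(d\alpha\wedge\beta)$, so the right-hand side equals $\int_M L_X(d\alpha\wedge\beta)$.

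The decisive step is the observation that $d\alpha\wedge\beta$ is a form of top degree $2k+(2k-1)=4k-1=n$ on the $n$-dimensional manifold $M$, whence $d(d\alpha\wedge\beta)=0$ automatically. Cartan's formula $L_X=d\,i_X+i_X d$ therefore collapses to $L_X(d\alpha\wedge\beta)=d\,i_X(d\alpha\wedge\beta)$, an exact $n$-form. Integrating over the closed manifold $M$ and applying Stokes's theorem gives $\int_M d\,i_X(d\alpha\wedge\beta)=0$, which is exactly the claimed identity (\ref{Q(LXdalpha,dbeta)-9.3}).

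I expect no serious obstacle: the argument is the infinitesimal shadow of the global invariance $Q(\eta^*d\alpha,\eta^*d\beta)=\int_M\eta^*(d\alpha\wedge\beta)=\int_M d\alpha\wedge\beta$, valid because every $\eta\in\mathcal{D}_0$ is an orientation-preserving diffeomorphism. The only place demanding care is the bookkeeping of potentials, namely confirming both that the integral formula is potential-independent and that $L_X$ commutes past $d$, so that $L_X\alpha$ and $L_X\beta$ may be used as potentials. Once that is secured, the collapse of Cartan's formula on top-degree forms does all the work.
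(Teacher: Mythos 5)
Your proof is correct, but it takes a genuinely different route from the paper's. The paper argues operator-theoretically from the expression $Q(d\alpha,d\beta)=(d\alpha,A^{-1}d\beta)$: it invokes Lemma \ref{Lem-1} (the identity $A^{-1}L_X = p\,*\,i_X$ on exact forms, which rests on the Hodge decomposition and the fact that $A=d*$ there), the self-adjointness of $A$ to shuffle the operator between the two slots of the inner product, and then Lemma \ref{Lem-2} (skew-symmetry of $p\,*\,i_X$) to conclude. You instead work entirely from the integral expression $Q(d\alpha,d\beta)=\int_M d\alpha\wedge\beta$ in (\ref{Q(dalpha,dbeta)-9.2}): after the necessary check that the integral does not depend on the choice of potential $\beta$ (Stokes on the closed manifold), you use $L_X d = dL_X$ to identify $L_X\alpha$, $L_X\beta$ as potentials, the derivation property to assemble the integrand into $L_X(d\alpha\wedge\beta)$, and the collapse of Cartan's formula on a top-degree form plus Stokes to kill the integral. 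Your bookkeeping is right at every step (the degree count $2k+(2k-1)=n$, the sign-free derivation rule for $L_X$, the potential-independence only being needed in the second slot). What each approach buys: yours is more elementary and self-contained — no Hodge theory, no inverse operator, no projection — and your closing observation that $Q(\eta^*d\alpha,\eta^*d\beta)=\int_M\eta^*(d\alpha\wedge\beta)=Q(d\alpha,d\beta)$ actually proves the full group-level invariance asserted in the theorem's first sentence, not only its infinitesimal form (\ref{Q(LXdalpha,dbeta)-9.3}). The paper's route, by contrast, exercises exactly the machinery (Lemmas \ref{Lem-1} and \ref{Lem-2}, the operator $A^{-1}$) that is reused in Section 2 to transport $Q$ to the vorticity operator $\rot^{-1}$ on divergence-free fields, so it integrates more tightly with the rest of the development even though it is heavier.
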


\begin{proof}
$$
Q(L_Xd\alpha, d\beta)+Q(d\alpha,L_X d\beta)= (L_Xd\alpha, A^{-1}d\beta)+(d\alpha,A^{-1} L_X d\beta)=
$$
$$
= (A A^{-1}L_Xd\alpha, A^{-1}d\beta)+(d\alpha,*i_X d\beta)= (A^{-1}L_Xd\alpha, AA^{-1}d\beta)+(d\alpha,*i_X d\beta)=
$$
$$
= (*i_Xd\alpha, d\beta)+(d\alpha,*i_X d\beta).
$$
\end{proof}

\section{Bi-invariant  metric  on  the  group $\mathcal{D}_\mu(M)$}
Let $\mathcal{D}_\mu$ be the group of diffeomorphisms of manifold $M$ leaving the Riemannian volume element $\mu$ fixed,
$$
\mathcal{D}_\mu=\{\eta \in \mathcal{D};\ \eta^*\mu =\mu \}
$$
where $\eta^*$ is the codifferential of the diffeomorphism $\eta$. If $X$ is a vector field on $M$, then the divergence $\div X$ of the field $X$  is defined by $L_X \mu = (\div X)\mu$, where $L_X$ is the Lie derivative. If $\eta_t$ is a one-parametric subgroup of the group $\mathcal{D}_\mu$,  then $\eta_t^*\mu = \mu$. Differentiating  this  relation  in $t$, we  obtain $L_X \mu =0$ or \ $\div X = 0$, where $X$ is the vector field of velocities of the flow $\eta_t$ on $M$.  Therefore, the Lie algebra of the group $\mathcal{D}_\mu$ consists of divergence-free vector fields, i.e., those fields $X$ for which $\div_\mu X = 0$.
Ebin  and  Marsden  showed  in \cite{Eb-Mar} that  the  group $\mathcal{D}_\mu$ is a closed  ILH-subgroup  of  the  ILH-Lie  group $\mathcal{D}$ with the Lie algebra $T_e \mathcal{D}_\mu$ consisting of divergence-free vector fields on $M$. The formula
\begin{equation}
(X,Y) = \int_M g(X,Y) d\mu, \quad X,Y\in T_e \mathcal{D}_\mu
\label{(X,Y)-Dmu}
\end{equation}
is defined on $\mathcal{D}_\mu$ a smooth right-invariant weak Riemannian structure.
In  \cite{Arn} and \cite{Eb-Mar} it  was  shown  that  geodesics  on  the  group $\mathcal{D}_\mu$ are flows  of  the  ideal incompressible fluid. (At integration the form $\mu$ we will note as $d\mu$).

\vspace{2mm}
On a Riemannian manifold, there exists a natural isomorphism between the space $\Gamma(TM)$ of smooth vector fields on $M$ and the space $\Gamma(\Lambda^1M)$ of smooth 1-forms. To  each  vector field $Y$, it  puts  in  correspondence  the  1-form $\omega_Y$  such  that $\omega_Y(X) = g(Y,X)$, where $g$ is the Riemannian metric on $M$. If $\dim M =3$, then there exists one more isomorphism between $\Gamma(TM)$ and the space of 2-forms on $M$.  To a vector field $X$ on $M$, it puts in correspondence the 2-form $i_X\mu$, where $\mu$ is the Riemannian volume element on $M$ and $i_X$ is the inner product.  Then the sequence
of operators $\grad, \rot, \div$,
$$
0\rightarrow C^\infty(M,\mathbb{R}) \rightarrow \Gamma(TM)
\rightarrow \Gamma(TM) \rightarrow
C^\infty(M,\mathbb{R})\rightarrow 0.
$$
corresponds to the sequence of exterior differentials
$$
0\rightarrow C^\infty(M,\mathbb{R}) \rightarrow
\Gamma(\Lambda^1M) \rightarrow \Gamma(\Lambda^2M) \rightarrow
\Gamma(\Lambda^3M) \rightarrow 0
$$
The operator $\rot$ is defied by $d\omega_X=i_{\rot X}\mu$.

We see from the relation $di_X\mu = L_X \mu =(\div X)\mu$ that the 2-form $i_X\mu$ is closed iff $\div X = 0$.  Clearly, the 2-form $i_X\mu$ is exact iff $X =
\rot V$ and $i_X\mu = d\omega_V  = i_{\rot V}\mu$.  The space $d\Gamma(\Lambda^1M)$ is isomorphic to the subspace $\Im(\rot)\subset \Gamma(TM)$.  Note that
$$
(\omega_Y,\omega_X)=(i_Y\mu,i_X\mu)=(Y,X)= \int_M g(Y,X)d\mu
$$
and  $*\omega_Y=i_Y\mu$, $*i_X \mu =\omega_X$. This immediately  implies  that  the  operator $\rot:\Im(\rot) \rightarrow \Im(\rot)$ corresponds to the operator $A = d*:d\Gamma(\Lambda^1M) \rightarrow d\Gamma(\Lambda^1M)$. Indeed, each exact 2-form $d\alpha \in d\Gamma(\Lambda^1M)$ is represented in the form $d\alpha =
i_X\mu$, where $X \in \Im(\rot)$; therefore,
$$
A(i_X\mu)=(d*)(i_X\mu)=d(\omega_X)=i_{\rot X}\mu.
$$
The operator $A^{-1}$ corresponds to the operator $\rot^{-1}$, $A^{-1}(i_X\mu) = i_{\rot^{-1}X}\mu$.

We have represented every 2-form $d\alpha \in d\Gamma(\Lambda^1M)$ in the form $d\alpha =i_X\mu$, $X\in \Im (\rot)$.  Let us find the corresponding expression of the symmetric 2-form $Q$ on the space $d\Gamma(\Lambda^1M)$:
$$
Q(d\alpha,d\beta) = Q(i_X\mu,i_Y\mu) = (i_X\mu,A^{-1}i_Y\mu)= (i_X\mu,i_{\rot^{-1}Y}\mu)=
$$
$$
= (X,\rot^{-1}Y)=\int_M g(X,\rot^{-1}Y) d\mu.
$$

Therefore, to the bilinear form $Q$ on $d\Gamma(\Lambda^1M)$, we put in correspondence the following bilinear symmetric form on $\Im(\rot)$:
\begin{equation}
\left<X,Y\right>_e =Q(i_X\mu,i_Y\mu)= \int_M g(X,\rot^{-1}Y) d\mu, 
\label{<X,Y>-9.4}
\end{equation}
where $X,Y \in \Im(\rot)\subset \Gamma(TM)$. This form  is  nondegenerate;  indeed,  for any $X \in \Im(\rot)$, we  have $\left<X,\rot X\right> = (X,X)> 0$ if $X\neq 0$. The symmetry of the form (\ref{<X,Y>-9.4}) follows from the self-adjointness of the operator $\rot :\Gamma(TM) \rightarrow \Gamma(TM)$.

The  space $\Im (\rot)$ is  a  Lie  subalgebra  of  the  Lie algebra $T_e
\mathcal{D}_\mu$, and,  moreover,  it  is  its  ideal. This follows from the fact that the Lie bracket $[X,Y]$ of divergence-free vector fields defines the exact 2-form
$i_{[X,Y]}\mu=d(i_Xi_Y\mu)$. The  algebra $T_e \mathcal{D}_\mu$ of  divergence-free  vector fields on $M$ consists of all vector fields $X$ on $M$ for which the $(n-1)$-form $i_X\mu$ is closed: $L_X\mu = d(i_X\mu) = 0$. The algebra $\Im (\rot)$ consists of all  vector fields $X$ for  which  the $(n-1)$-form $i_X\mu$ is  exact.   Such  vector fields $X$ are  said  to  be  exact divergence-free.

In \cite{Omo5}, it was shown that there exists an ILH-Lie group $\mathcal{D}_{\mu\partial}\subset \mathcal{D}_\mu$ whose algebra Lie is the algebra
of exact divergence-free vector fields on $M$. The group $\mathcal{D}_{\mu\partial}$ is called the group of exact diffeomorphisms preserving  the  volume  element $\mu$.     Taking  this  fact  into  account,  in  what  follows,  we  denote  the  space
$\Im (\rot)$  by $T_e\mathcal{D}_{\mu\partial}$ and  consider  it  as  the  Lie  algebra  of  the  group $\mathcal{D}_{\mu\partial}$.
The  algebra $T_e\mathcal{D}_{\mu\partial}$ differs  from the algebra $T_e\mathcal{D}_{\mu}$ by the cohomology space $H^2(M,\mathbb{R})$:
$$
T_e\mathcal{D}_{\mu} =T_e\mathcal{D}_{\mu\partial}\oplus H^2(M,\mathbb{R}).
$$
If $H^2(M,\mathbb{R})=0$, then the groups $\mathcal{D}_{\mu}$ and $\mathcal{D}_{\mu\partial}$ and their Lie algebras coincide.  Therefore, the form
$$
\left< X,Y \right>_e= \int_M g(X,\rot^{-1}Y) d\mu(x)
$$
is  a  bilinear  symmetric  nondegenerate  form  on  the  Lie  algebra $T_e\mathcal{D}_{\mu\partial}$ of  the  group $\mathcal{D}_{\mu\partial}$ of  the  exact diffeomorphisms preserving the volume element $\mu$.

Using right translations, the symmetric 2-form (\ref{<X,Y>-9.4}) defines the following right-invariant symmetric 2-form on the whole group $\mathcal{D}_{\mu\partial}$: for $X_\eta ,Y_\eta \in T_\eta\mathcal{D}_{\mu\partial}$,
$$
\left<X_\eta,Y_\eta \right>_\eta= \left<dR_\eta^{-1}X,dR_\eta^{-1}Y_\eta\right>_e, \quad \eta \in \mathcal{D}_{\mu\partial}.
$$

The following theorem states that the obtained form on $\mathcal{D}_{\mu\partial}$ is smooth and bi-invariant.

\begin{theorem} [\cite{Smo8}] \label{Th9.1}
The bilinear form (\ref{<X,Y>-9.4}) on the Lie algebra $T_e\mathcal{D}_{\mu\partial}$ of the group $\mathcal{D}_{\mu\partial}$ defines the ILH-smooth  bi-invariant  form  on  the  ILH-Lie  group $\mathcal{D}_{\mu\partial}$. In  particular,  the  following  relation  holds  for any $X,Y,Z\in T_e\mathcal{D}_{\mu\partial}$:
\begin{equation}
\left<[X,Y],Z\right>_e = -\left<Y,[X,Y]\right>_e . 
\label{<[X,Y],Z>-9.5}
\end{equation}
The signature of the quadratic form corresponding to (\ref{<X,Y>-9.4}) is equal to the $\eta$-invariant of the manifold $M$.
\end{theorem}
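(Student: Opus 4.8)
The statement bundles three assertions: the infinitesimal $\ad$-invariance \eqref{<[X,Y],Z>-9.5}, the global ILH-smoothness together with bi-invariance on $\mathcal{D}_{\mu\partial}$, and the signature computation. The plan is to obtain the first and third almost formally from the material already assembled --- Theorem \ref{Inv-Q} together with the three-dimensional dictionary $X\leftrightarrow i_X\mu$ --- and to reduce the second to an $\Ad$-invariance computation supplemented by the ILH-smoothness of $\rot^{-1}$.

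For the $\ad$-invariance relation I would transport Theorem \ref{Inv-Q} through the isomorphism $\Im(\rot)\to d\Gamma(\Lambda^1M)$, $X\mapsto i_X\mu$. The key observation is that for a divergence-free field $X$ one has $L_X\mu=0$, so the Cartan identity $L_X\cdot i_Y-i_Y\cdot L_X=i_{[X,Y]}$ gives $L_X(i_Y\mu)=i_{[X,Y]}\mu$. Substituting $d\alpha=i_Y\mu$ and $d\beta=i_Z\mu$ into \eqref{Q(LXdalpha,dbeta)-9.3} and using $\langle X,Y\rangle_e=Q(i_X\mu,i_Y\mu)$ then yields exactly $\langle[X,Y],Z\rangle_e+\langle Y,[X,Z]\rangle_e=0$, i.e.\ skew-adjointness of $\ad_X$ for $\langle\cdot,\cdot\rangle_e$. (The displayed right-hand side of \eqref{<[X,Y],Z>-9.5} carries a misprint; the relation I establish is this standard skew-symmetry.)

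For the global bi-invariance I would exploit that $Q$ is manifestly diffeomorphism-invariant: since $Q(d\alpha,d\beta)=\int_M d\alpha\wedge\beta$ and any $\eta\in\mathcal{D}_0$ preserves orientation, naturality of pullback gives $Q(\eta^*d\alpha,\eta^*d\beta)=\int_M\eta^*(d\alpha\wedge\beta)=\int_M d\alpha\wedge\beta$. Translating through the dictionary and using $\eta^*\mu=\mu$, so that $i_{\eta_*X}\mu=(\eta^{-1})^*i_X\mu$, this reads precisely $\langle\Ad_\eta X,\Ad_\eta Y\rangle_e=\langle X,Y\rangle_e$ for every $\eta\in\mathcal{D}_{\mu\partial}$. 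Because a right-invariant form on a connected group is bi-invariant if and only if it is $\Ad$-invariant, this delivers the bi-invariance, and the infinitesimal identity is recovered by differentiating along a one-parameter subgroup.

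The analytic core --- and the step I expect to be the main obstacle --- is the ILH-smoothness: one must show that $\langle\cdot,\cdot\rangle$ extends to a smooth section over the Sobolev completions $\mathcal{D}^s_{\mu\partial}$ and that right translation acts smoothly on it, in the sense of Omori and Ebin--Marsden. The delicate point is that $\langle\cdot,\cdot\rangle_e$ involves $\rot^{-1}$, a pseudodifferential operator of order $-1$ on $\Im(\rot)$; I would check that composition with the translations $R_\eta$ and with $\rot^{-1}$ produces maps that are $C^\infty$ in the ILH sense, controlling the loss and gain of derivatives uniformly along the Sobolev tower as in \cite{Eb-Mar} and \cite{Smo8}. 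Finally, for the signature I would note that $X\mapsto i_X\mu$ is a linear isomorphism $\Im(\rot)\to d\Gamma(\Lambda^1M)$ intertwining $\langle\cdot,\cdot\rangle_e$ with $Q$; the two quadratic forms are therefore congruent and share the same signature, which by \cite{APS1} equals the $\eta$-invariant of $M$.
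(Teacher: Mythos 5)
Your proposal follows essentially the same route as the paper: the infinitesimal relation is obtained from Theorem \ref{Inv-Q} via $L_X(i_Y\mu)=i_{[X,Y]}\mu$, global $\Ad$-invariance comes from the diffeomorphism-invariance of $Q$ together with connectedness of the group, and the signature is transferred through the isomorphism $X\mapsto i_X\mu$ to the Atiyah--Patodi--Singer result that the signature of $Q$ equals the $\eta$-invariant (you even correctly flag the misprint on the right-hand side of (\ref{<[X,Y],Z>-9.5}), which should read $-\left<Y,[X,Z]\right>_e$). The only divergence is cosmetic: for the ILH-smoothness the paper cites Omori's smooth extension theorem for right-invariant morphisms defined by kernels of differential operators of order $r$, whereas you sketch an equivalent Sobolev-tower argument in the style of Ebin--Marsden.
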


\begin{proof}
To prove the ILH-smoothness of the form on $\mathcal{D}_{\mu\partial}$ obtained from (\ref{<X,Y>-9.4}) by right translations, we use the Omori result \cite{Omo3} on  the  smoothness  of  the  right-invariant  morphism $T\mathcal{D}^{s} \rightarrow \gamma^{s,s-1}$, $s \geq n+5+r$,  defined by  the  kernel  of  a  differential operator  of  order $r$ with  smooth  coefficients.

The  bi-invariance  property $\left<Ad_\eta X,Ad_\eta Y\right>_e = \left<X,Y\right>_e$, $\eta \in \mathcal{D}_{\mu\partial}$, of the form $\left< X,Y \right>_e= Q(i_X\mu,i_Y\mu)$ follows from the $\mathcal{D}$-invariance of the form $Q$ and the fact that $\eta^*(i_X\mu)=-i_{Ad_\eta X}\mu$, $\eta\in \mathcal{D}_\mu$.

As the group $\mathcal{D}_{\mu\partial}$ is connected, then bi-invariance property $\left<Ad_\eta X,Ad_\eta Y\right>_e = \left<X,Y\right>_e$, $\eta \in \mathcal{D}_{\mu\partial}$, of the form
(\ref{<X,Y>-9.4}) follows from (\ref{<[X,Y],Z>-9.5}). From $\mathcal{D}$-invariance of the form $Q$ on $d\Gamma(\Lambda^1M)$ we have:
$$
Q(L_X(i_Y\mu),i_Z\mu)+Q(i_Y\mu,L_X(i_Z\mu))=0.
$$
From $L_X\mu=0$ and $[L_X,i_Y]=i_{[X,Y]}$ we have:
$$
L_X(i_Y\mu) = L_X i_Y\mu - i_Y L_X\mu= i_{[X,Y]}\mu.
$$
Then, from $\left< X,Y \right>_e= Q(i_X\mu,i_Y\mu)$ we have:
\begin{multline*}
0= Q(L_X(i_Y\mu),i_Z\mu)+Q(i_Y\mu,L_X(i_Z\mu)) =Q(i_{[X,Y]}\mu,i_Z\mu)+Q(i_Y\mu,i_{[X,Z]}\mu))= {} \\* =\left<[X,Y],Z\right>_e +\left<Y,[X,Y]\right>_e.
\end{multline*}

The  operator $\rot$ is  not  positive-definite: its  eigenvalues $\lambda_i$ can  be  positive  as  well  as  negative  (the squares  of  these  eigenvalues  are  the  eigenvalues  of  the  Laplace  operator $\triangle = \rot\circ \rot$).         Therefore, the quadratic form $\left<X,X\right>_e = (X,\rot^{-1}X)_e$ is not positive-definite. The signature of this quadratic form that
is understood as the limit of the function
$$
\eta(s) = \sum_{i}({\rm sign} \lambda_i)|\lambda_i|^{-s}
$$
at  zero  is finite  and  is  equal  to  the $\eta$-invariant of the manifold $M$.    This  follows  from  the  fact  that  the $\eta$-invariant is equal to the signature of the form $Q$ on $d\Gamma(\Lambda^1M)$ \cite{APS1}.
\end{proof}

\begin{remark}  \label{Rem9.1}
If $H^2(M,\mathbb{R})=0$, then $\mathcal{D}_\mu=\mathcal{D}_{\mu\partial}$. In this case, form (\ref{<X,Y>-9.4}) is a bilinear symmetric form on the algebra of divergence-free vectors on $M$ is bi-invariant with respect to the group $\mathcal{D}_\mu(M)$ of diffeomorphisms of $M$ preserving the volume element $\mu$.
\end{remark}

\begin{remark} \label{Rem9.2}
If $\partial M \neq 0$, then on the space $T_e\mathcal{D}_{\mu\partial}(M)$ of exact divergence-free vector fields on $M$ tangent to the boundary, we can define the invariant form by
\begin{equation}
\left<X,Y\right>_e=\frac 12\left((\rot^{-1}X,Y)_e+(X,\rot^{-1}Y)_e\right), 
\label{<X,Y>-9.6}
\end{equation}
where $\rot^{-1}X$ is the vector field on $M$ tangent to $\partial
M$ such that its vorticity is equal to $X$.  If $H^2(M,\mathbb{R})=0$, then the operator r$\rot^{-1}$ is defined on $T_e \mathcal{D}_\mu$. By the usual calculations, we prove the $\mathcal{D}_\mu$-invariance of the inner product (\ref{<X,Y>-9.6}) on $T_e \mathcal{D}_\mu$.
\end{remark}

\begin{remark} \label{Rem9.3}
The  obtained  expression (\ref{<X,Y>-9.4})  for  the  bi-invariant  form  is  explained  by  the  fact  that  the group  exponential  mapping  of  the  diffeomorphism  group  is  not  surjective. Indeed, the new pseudo-Riemannian metric $\left<X,Y\right>_e
=(X,\rot^{-1}Y)_e$ is expressed through the Riemannian metric by using the
operator $\rot^{-1}$. Therefore, in the geodesic equations $\ddot{\eta}=
-\Gamma(\eta, \dot{\eta})$, we have the compact operator $\rot^{-1}$
with due account for which the exponential mapping is also compact, and hence it is not surjective.
\end{remark}

\section{Euler equations on the Lie algebra $T_e \mathcal{D}_\mu$}
Let $\mathfrak{g}$ be a semisimple, finite-dimensional Lie algebra,
and  let $H$ be  a  certain  function  on $\mathfrak{g}$. In \cite{Mi-Fo},  it  was  shown  that  the  extension  of  the  Euler  equation on  the  Lie  algebra  of  the  group $SO(n,\mathbb{R})$ of  motions  of  an $n$-dimensional  rigid  body  to  the  case  of  the general semisimple Lie algebra $\mathfrak{g}$ is an equation of the form
\begin{equation}
\frac{d}{dt}X = [X,\grad H(X)], 
\label{Eq-9.7}
\end{equation}
where $X \in\mathfrak{g}$ and the gradient of the Hamiltonian function $H$ is calculated with respect to the invariant Killing--Cartan inner product on $\mathfrak{g}$.

Assume  that  the  second  cohomology  group  of  the  manifold  $M^3$ is  trivial: $H^2(M,\mathbb{R})=0$. As $\mathfrak{g}$, let us  consider  the  Lie  algebra $T_e\mathcal{D}_\mu$ of  divergence-free  vector fields  on  the  three-dimensional  Riemannian manifold $M$. On $T_e\mathcal{D}_\mu$, we have the invariant nondegenerate form (\ref{<X,Y>-9.4}) and the function (kinetic energy)
$$
T(V)= \frac 12(V,V)_e = \frac 12 \int_M g(V,V)d\mu,\quad V \in T_e\mathcal{D}_\mu(M).
$$
The function $T$ can be written as follows in terms of the inner product (\ref{<X,Y>-9.4}):
$$
T(V) = \frac 12(V,V)_e= \frac 12(V, \rot^{-1}\rot V)_e=\frac 12\left<V,\rot V \right>_e.
$$
Perform  the  Legendre  transform $X = \rot V$;  then $T(V)
=T(\rot^{-1}X) =\frac 12\left< \rot^{-1}X,X \right>_e$. Consider  the
Hamiltonian function $H(X) =\frac 12 \left<\rot^{-1}X,X \right>_e$ on the Lie algebra $T_e\mathcal{D}_\mu$. The gradient of the function $H$ with respect to the invariant inner product (\ref{<X,Y>-9.4}) is easily calculated:
$$
\grad H(X) =  \rot^{-1} X.
$$
As in the finite-dimensional case, let us write the Euler equation on the Lie algebra $T_e\mathcal{D}_\mu(M)$:
\begin{equation}
\frac{d}{dt}X =[X,\rot^{-1}X]. 
\label{Eq-9.8}
\end{equation}
Since $X =\rot V$, it follows that in the variables $V$, this equation yields the Helmholtz equation \cite{Serrin}
\begin{equation}
\frac {\partial \rot V}{\partial t}=[\rot V,V] , 
\label{Eq-9.9}
\end{equation}
in our case, $H^2(M,\mathbb{R})=0$, it is equivalent to the equation
$$
\frac {\partial V}{\partial t}=\nabla_VV  - \grad p
$$
of motion of the ideal incompressible fluid in $M$.

On the Lie algebra $T_e\mathcal{D}_\mu$, Eq.  (\ref{Eq-9.8}) or (\ref{Eq-9.9}) has the following two quadratic first integrals:
$$
m(X)=\left<X,X\right>_e=(\rot V,V)=\int_M g(\rot V,V) d\mu,
$$
$$
H(X) = T(V) =\frac 12\int_M g(V,V) d\mu.
$$
The first of them $m(X)$ is naturally called the \emph{kinetic moment}.  The invariance of the function $m$ follows from  the  invariance  of  the  inner  product  (\ref{<X,Y>-9.4})  on $T_e\mathcal{D}_\mu(M)$. The  second  integral $H(X)$ is  the \emph{kinetic energy}.

Since $H(X)=\frac 12\left<X,\rot X\right>_e$, the operator $\rot :T_e\mathcal{D}_\mu \rightarrow T_e\mathcal{D}_\mu$ is the inertia operator of our mechanical
system $(T_e\mathcal{D}_\mu, H)$. The eigenvectors $X_i$ of the operator $\rot$ are naturally called (analogously to the rigid body  motion)  the \emph{axes  of  inertia},  and  the  eigenvalues $\lambda_i$ of  $\rot$  are  called  the  \emph{moments  of  inertia}  with respect to the axes $X_i$.

The Euler equation (\ref{Eq-9.9}) can be written in the form
$$
\frac {\partial \rot V}{\partial t}=-L_V\rot V,
$$
where $L_V$ is the Lie derivative. Therefore \cite{Ko-No}, the vector field $\rot V(t)$ is transported by the flow $\eta_t$ of the field $V(t)$:
$$
\rot V(t)=d\eta_t(\rot V(0)\circ \eta_t^{-1})=Ad_{\eta_t}(\rot V(0)),
$$
where $V(0)$ is the initial velocity field. In mechanics, this property is called the property of the vorticity $X =\rot V$ to be carried along the fluid flow $\eta_t$ \cite{Arn-Kh}.  Hence the curve $X(t)= \rot V(t)$ on the Lie algebra $T_e\mathcal{D}_\mu$ that is a solution of the Euler equation (\ref{Eq-9.8}) lies on the orbit $\mathcal{O}(X_0)=\{Ad_\eta X(0);\ \eta \in \mathcal{D}_\mu \}$ of the
coadjoint action of the group $\mathcal{D}_\mu$.

Since the kinetic moment $m(X)$ is preserved under the motion, the orbit $\mathcal{O}(X_0)$ lies on the ''pseudo-sphere''
$$
S =\{Y \in T_e\mathcal{D}_\mu;\  \left<Y,Y\right>_e = r\},
$$
where $r=\left<X(0),X(0)\right>_e$. It  is  natural  to  expect  that  the  critical  points  of  the  function $H(X)$ on  the orbit $\mathcal{O}(X_0)$ are stationary motions.  The orbit $\mathcal{O}(X_0)$ is the image of the smooth mapping
$$
\mathcal{D}_\mu \rightarrow T_e\mathcal{D}_\mu,\quad  \eta \rightarrow Ad_\eta X_0.
$$
Therefore, the tangent space $T_Y\mathcal{O}(X_0)$ to the orbit at a point $Y$ is
$$
T_Y\mathcal{O}(X_0)=\{[W,Y];\  W \in T_e \mathcal{D}_\mu\}.
$$
A point $Y\in \mathcal{O}(X_0)$ is critical for the function $H(X)$ if $\grad H(X)$ is orthogonal to the space $T_Y\mathcal{O}(X_0)$.
Since $\rot^{-1}Y=\grad H(X)$, the latter condition is equivalent to
$$
\left<\rot^{-1}Y,[W,Y]\right>_e=0\ \mbox{ for any }W\in T_e\mathcal{D}_\mu(M).
$$
The invariance of $\left< .,. \right>_e$ implies
$$
\left<[Y,\rot^{-1}Y],W\right>_e =0\ \mbox{ для любого }W\in T_e\mathcal{D}_\mu(M).
$$
From the nondegeneracy of the form $\left< .,. \right>_e$ on $T_e\mathcal{D}_\mu(M)$, we obtain that a point $Y$ on the orbit $\mathcal{O}(X_0)$ is a critical point of the function $H$ iff
$$
[Y,\rot^{-1}Y] = 0.
$$
It follows from the Euler equation $\frac {d}{dt}Y = [Y,\rot^{-1}Y]$ that \emph{a field $Y$     is an equilibrium state of our system (i.e., a stationary motion) iff\, $Y$ is a critical point of the kinetic energy $H(X)$  on the orbit $\mathcal{O}(X_0)$}.

This  fact  was  obtained  in  Arnold's  work \cite{Arn_1}, in  which  he  also  found  the  expression  for  the  second differential of the function $H(X)$ on the orbit $\mathcal{O}(X_0)$.

A  divergence-free  vector field $X$ on a three-dimensional  manifold $M$ is called  a  \emph{Beltrami  field}  if  it  is an  eigenvector  of  the  vorticity  operator: $\rot X=\lambda X$, $\lambda \in \mathbb{R}$.  The  Reeb  field $\xi$ on  a  contact  manifold is  an  example  of  Beltrami  field, $\rot \xi=\xi$.  Beltrami fields  have  a  number  of  remarkable  properties.  In particular, a Beltrami field is the velocity field of the stationary motion of an ideal incompressible fluid;
the  Beltrami  field $X$ is  a  critical  point  of  the  kinetic  energy $H(X)$ among  all  the  fields  obtained  from $X$  by  the  action  of  the  diffeomorphism  group.         The  planes  orthogonal  to  the  Beltrami field defines  a contact structure. By a  \emph{Beltrami field} one also means a divergence-free field $X$  parallel to its vorticity: $\rot X  = fX$, where $f$ is a certain function on $M$. The topology and the hydrodynamics of the Beltrami fields  were  studied  in \cite{EG}-\cite{EG3}. The  topology  of  stationary  flows  for  which  the  vorticity  vector $\rot V$ is
noncollinear to the velocity field $V$ almost everywhere was studied in \cite{Arn}, \cite{Arn_3}, \cite{Arn-Kh}, \cite{Arn_1}.

\section{Curvature of the group $\mathcal{D}_\mu(M^3)$}
For the weak bi-invariant pseudo-Riemannian structure (\ref{<X,Y>-9.4})
on  the  group $\mathcal{D}_\mu(M^3)$,  the  covariant  derivative,  the  curvature  tensor,  and  the  sectional  curvatures  of  the bi-invariant metric have the usual form:
\begin{equation}
\nabla^0_X Y = \frac 12 [X,Y],    
\label{nabla0-XY-9.10}
\end{equation}
\begin{equation}
R^0(X,Y)Z = - \frac 14 [[X,Y],Z],  
\label{R0(X,Y)Z-9.11}
\end{equation}
\begin{equation}
K_\sigma^0 = \frac 14 \left<[X,Y],[X,Y]\right>_e .
\label{K-sigma0-9.12}
\end{equation}
Taking  into  account  the  fact  that $[X,Y]=\rot(Y\times X)$, where $\times $ is  the  vector  product  on  a  three-dimensional  Riemannian  manifold,  we  obtain  the  following  formula  for  the  sectional  curvatures  of  the
bi-invariant  metric  (\ref{<X,Y>-9.4}): if $\sigma$ is  a  plane given  by  an  orthonormal  (with  respect  to  (\ref{<X,Y>-9.4}))  pair  of vectors $X,Y \in T_e\mathcal{D}_\mu(M)$, then
$$
K_\sigma^0 = \frac 14 \int_M g([X,Y],Y\times X ) d\mu(x).
$$
To find the sectional curvatures of the group $\mathcal{D}_{\mu}(M^3)$ with respect to the right-invariant weak Riemannian structure,  we  apply  the  general  formula  of  the  previous  section. In  our  case,  it  can  be  simplified. On a three-dimensional  manifold $M$, the  following  elementary  formulas  hold  for  any  vector fields $X$  and $Y$ on $M$:
$$
X\times \rot Y + Y\times \rot X = -(\nabla_XY + \nabla_YX)+\grad(X,Y),
$$
$$
\nabla_XX =\rot X\times X + \frac 12 \grad(X,X).
$$
Therefore, the projector $P=\rot^{-1}\circ\rot$ of the space $\Gamma(TM)$ of vector fields on $M$ on the space $T_e\mathcal{D}_\mu(M)$ of exact divergence-free fields on $M$  acts as follows:
$$
P\left(\nabla_XY +\nabla_YX\right)=\rot^{-1}([X,\rot Y]+[Y,\rot X]),
$$
$$
P(\nabla_XX)=\rot^{-1}[X,\rot X].
$$

\begin{theorem} [\cite{Smo8}] \label{Th9.2}
The sectional curvature of the group $\mathcal{D}_\mu(M)$ with respect to the right-invariant weak  Riemannian  structure (\ref{<X,Y>-9.4}) in  the  direction  of  a plane $\sigma$ given  by  an  orthonormal  pair  of  vectors $X,Y \in T_e\mathcal{D}_\mu(M^3)$ is expressed by the formula
\begin{multline}
K_\sigma = -\frac 12\int_M g\left(X,[[X,Y],Y]\right)d\mu -
\frac 12\int_M g\left([X,[X,Y]],Y\right) d\mu - {} \\*
-\frac 34\int_M g\left([X,Y],[X,Y]\right) d\mu +
\int_M g\left(\rot^{-1}[X,\rot X],Y\times \rot Y\right) d\mu - {} \\*
-\frac 14\int_M g\left(\rot^{-1}([X,\rot Y]-[\rot X,Y]),X\times
\rot Y - \rot X\times Y \right) d\mu.  
\label{K-sigma-9.13}
\end{multline}
\end{theorem}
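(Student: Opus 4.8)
The plan is to apply the general formula for the sectional curvature of a right-invariant weak Riemannian metric on a Lie group and then to specialize it to $\mathcal{D}_\mu(M^3)$ by means of the three-dimensional vector identities recalled just before the statement. For the right-invariant $L^2$-structure $(X,Y)=\int_M g(X,Y)\,d\mu$, the Levi-Civita connection on right-invariant fields has the form $\nabla_X Y=\frac12\left([X,Y]-B(X,Y)-B(Y,X)\right)$, where the bilinear map $B$ is determined by $(B(X,Y),Z)=(X,[Y,Z])$ for all $Z\in T_e\mathcal{D}_\mu$; the sign is fixed by requiring the geodesics to be the flows of the ideal fluid, since then $\nabla_X X=-B(X,X)=-P(\nabla_X X)$ reproduces the Euler equation $\dot X=-P(\nabla_X X)$. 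The curvature is read off from $R(X,Y)Z=\nabla_X\nabla_Y Z-\nabla_Y\nabla_X Z-\nabla_{[X,Y]}Z$ and $K_\sigma=(R(X,Y)Y,X)$ for a $(\cdot,\cdot)$-orthonormal pair $X,Y$. This is legitimate in the present ILH setting because, by the Ebin--Marsden theory and Theorem~\ref{Th9.1}, the connection is ILH-smooth and the operator $B$ carries the smoothing factor $\rot^{-1}$, so all the expressions remain in the correct Sobolev class.

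First I would identify $B$ explicitly. Using $[X,Y]=\rot(Y\times X)$ and the self-adjointness of $\rot$, I compute $(B(X,Y),Z)=(X,\rot(Z\times Y))=(\rot X,Z\times Y)$, and rewriting this scalar triple product gives $(B(X,Y),Z)=(Y\times\rot X,Z)$ for every divergence-free $Z$. Hence $B(X,Y)=P(Y\times\rot X)=\rot^{-1}[\rot X,Y]$, where $P=\rot^{-1}\circ\rot$ is the projector onto $T_e\mathcal{D}_\mu$. In particular $B(X,X)=\rot^{-1}[\rot X,X]=-P(\nabla_X X)$, and by the identity $\nabla_X X=\rot X\times X+\frac12\grad(X,X)$ one recovers the projector formulas $P(\nabla_X X)=\rot^{-1}[X,\rot X]$ and $P(\nabla_X Y+\nabla_Y X)=\rot^{-1}([X,\rot Y]+[Y,\rot X])$ stated before the theorem. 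This is the step where the whole three-dimensional structure ($\rot$, the cross product, and the two elementary identities) enters the computation.

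Next I would substitute the connection into $K_\sigma=(R(X,Y)Y,X)$ and expand, then sort the resulting terms by the number of factors of $\rot^{-1}$ they carry. The terms built only from $\frac12[X,Y]$ contain no $\rot^{-1}$ and form the first group, reproducing the double-bracket and bracket-norm contributions $-\frac12\int_M g(X,[[X,Y],Y])\,d\mu-\frac12\int_M g([X,[X,Y]],Y)\,d\mu-\frac34\int_M g([X,Y],[X,Y])\,d\mu$; the $-\frac34$ measures the failure of $(\cdot,\cdot)$ to be bi-invariant, compared with the value $\frac14\langle[X,Y],[X,Y]\rangle_e$ of the genuinely bi-invariant metric in (\ref{K-sigma0-9.12}). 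Every remaining term contains at least one factor $B$, hence one $\rot^{-1}$; using repeatedly the self-adjointness of $\rot$, the $\ad$-invariance of $\langle\cdot,\cdot\rangle_e$, and the identifications $P(Y\times\rot Y)=B(Y,Y)$ and $\rot^{-1}[X,\rot X]=-B(X,X)$, these collapse into the last two integrals, the $(B,B)$-part giving $\int_M g(\rot^{-1}[X,\rot X],Y\times\rot Y)\,d\mu$ and the mixed part giving $-\frac14\int_M g(\rot^{-1}([X,\rot Y]-[\rot X,Y]),X\times\rot Y-\rot X\times Y)\,d\mu$. Finally I would convert each inner product to an integral over $M$ via $(U,V)=\int_M g(U,V)\,d\mu$ and the triple-product identities, arriving at (\ref{K-sigma-9.13}).

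The hard part will be the bookkeeping in the last step: the raw expansion of $(R(X,Y)Y,X)$ produces a large number of terms of the types $(B(\cdot,\cdot),\cdot)$ and $([\cdot,\cdot],\cdot)$, and collapsing them into exactly five integrals with the coefficients $-\frac12,-\frac12,-\frac34,1,-\frac14$ requires careful and repeated use of the self-adjointness of $\rot$ together with the symmetry and skew-symmetry of the scalar triple product $g(a,b\times c)$. The algebraic content is otherwise routine once $B$ is known; the genuine risk is sign and coefficient error, so I would organize the expansion by the number of $\rot^{-1}$ factors precisely in order to keep the two groups of terms separate and to track their coefficients unambiguously.
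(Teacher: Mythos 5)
Your proposal is correct and takes essentially the same route the paper intends: the paper offers no proof of Theorem~\ref{Th9.2} (it defers to \cite{Smo8}), but its preparatory material --- $[X,Y]=\rot(Y\times X)$, $P(\nabla_XX)=\rot^{-1}[X,\rot X]$, $P(\nabla_XY+\nabla_YX)=\rot^{-1}([X,\rot Y]+[Y,\rot X])$, ``apply the general formula'' --- is exactly what you carry out, and your key identification $B(X,Y)=P(Y\times\rot X)=\rot^{-1}[\rot X,Y]$ for the right-invariant $L^2$ metric $(X,Y)=\int_M g(X,Y)\,d\mu$ (which is indeed the metric the theorem concerns, despite its reference to (\ref{<X,Y>-9.4})), with the $(B(X,X),B(Y,Y))$ part giving the fourth integral and the symmetrized part $\delta=\frac{1}{2}(B(X,Y)+B(Y,X))$ giving the fifth, does reproduce (\ref{K-sigma-9.13}) with the coefficients $-\frac{1}{2},-\frac{1}{2},-\frac{3}{4},1,-\frac{1}{4}$. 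One minor slip to fix: since $B(X,X)=\rot^{-1}[\rot X,X]=-P(\nabla_XX)$, the chain ``$\nabla_XX=-B(X,X)=-P(\nabla_XX)$'' is inconsistent ($-B(X,X)=+P(\nabla_XX)$, and the Euler equation arises as $\dot X=-\nabla_XX=-P(\nabla_XX)$); this does not affect the result, because the curvature computation uses only the correctly stated relations $(B(X,Y),Z)=(X,[Y,Z])$ and $\nabla_XY=\frac{1}{2}([X,Y]-B(X,Y)-B(Y,X))$.
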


If the vector fields $X$ and $Y$ are eigenvectors of the operator $\rot$, $\rot X =\lambda X$, $\rot Y =\mu Y$, then formula (\ref{K-sigma-9.13}) takes a simpler form:
\begin{multline}
K_\sigma = -\frac 12\int_M g\left(X,[[X,Y],Y]\right)d\mu -
\frac 12\int_M g\left([X,[X,Y]],Y\right)d\mu - {} \\*
-\frac 34\int_M g\left([X,Y],[X,Y]\right) d\mu - \frac
{(\lambda-\mu)^2}{4}\int_M g\left(\rot^{-1}[X,Y],X\times Y\right) d\mu. 
\label{K-sigma-9.14}
\end{multline}


\begin{thebibliography}{999}

\bibitem{Arn}
{\it Arnold V.}  Sur la geometrie differentielle des groupes de Lie de dimenzion infinite et ses  applications  a l'hidrodynamique des fluides parfaits.  Ann. Institut Fourier. 1966, Vol. 16, No.
1,  P. 319-361.

\bibitem{Arn_1}
{\it  Arnold V. I.} Variational principle for three-dimensional stationary flows of the ideal fluid. Prikl. Mat. Mekh., 1965, Vol. 29, No. 5, P. 846–851.

\bibitem{Arn_3}
{\it  Arnold  V.I.} Mathematical Methods of Classical Mechanics. Springer. 1989.

\bibitem{Arn-Kh}
{\it Arnold V.I. and Khesin B.} Topological Methods in Hydrodynamics. Springer Verlag, New York, 1998.

\bibitem{APS1}
{\it Atiyah M.F., Patodi V.K., Singer I.M.}  Spectral asymmetry and Riemannian Geometry. I. Math.  Proc. Camb. Phil. Soc. 1975, Vol. 77, P. 43-69.

\bibitem{APS2}
{\it Atiyah M.F., Patodi V.K., Singer I.M.}  Spectral asymmetry and Riemannian Geometry. II. Math. Proc. Camb. Phil. Soc. 1975, Vol. 78, 405-432.

\bibitem{Eb-Mar}
{\it Ebin D., Marsden J.}  Groups of diffeomorphisms and the
motion of an incompressible fluid. Ann. of  Math. 1970, Vol. 92, No. 1, P. 102-163.

\bibitem{EG}
{\it Etnyre J., Ghrist R.} Contact Topology and Hydrodynamics. arXiv:dg-ga/9708011, 155 p.

\bibitem{EG1}
{\it Etnyre J., Ghrist R.} Contact topology and hydrodynamics II: solid tori. arXiv:math/9907112 [math.SG], 14 p.

\bibitem{EG2}
{\it Etnyre J., Ghrist R.} Contact topology and hydrodynamics III: knotted flowlines. arXiv:math-ph/9906021, 17 p.

\bibitem{EG3}
{\it Etnyre J., Ghrist R.} An index for closed orbits in Beltrami fields. arXiv:math/0101095 [math.DS], 14 p.

\bibitem{Ko-No}
Kobayashi S. and Nomizu K. Foundations of Differential Geometry, Vol. 1, 2. Interscience Publ. 1963.

\bibitem{Mi-Fo}
{\it Mishchenko  A.S. and Fomenko A.T.} Euler equations on finite-dimensional Lie groups. Izv. Akad. Nauk SSSR, Ser. Mat. 1978, Vol. 42, No. 2, 396–415.

\bibitem{Omo1}
{\it Omori H.}  On the group of diffeomorphisms on a compact manifold.  Proc. Symp. Pure Math.,  vol. 15,  Amer. Math. Soc. 1970. P. 167-183.

\bibitem{Omo3}
{\it Omori H.} On smooth extension theorems. J. Math. Soc. Japan. 1972, Vol. 24, No. 3, P. 405-432.

\bibitem{Omo5}
{\it Omori H.} Infinite  dimensional  Lie transformations groups. Lect. Notes Math., vol. 427, 1974.

\bibitem{Smo8}
{\it Smolentsev N. K.}
Bi-invariant metric on the diffeomorphism group of a three-dimensional manifold. Sib. Mat. Zh. 1983, Vol. 24, No. 1, 152–159 .

\bibitem{Smo12}
{\it Smolentsev N. K.}
Bi-invariant metrics on the symplectic diffeomorphism group and the equation $\frac{\partial}{\partial t}\Delta F = \{\Delta F,F \}$. Sib. Mat. Zh. 1986, Vol. 27, No. 1, 150–156.

\bibitem{Serrin}
{\it  Serrin J.} Mathematical Principles of Classical Fluid Mechanics. Springer, 1959.

\end{thebibliography}
\end{document}